\documentclass{amsart}

\newtheorem{theorem}{Theorem}[section]

\newtheorem{cor}{Corollary}[section]
\theoremstyle{definition}

\newtheorem{ex}[theorem]{Example}

\theoremstyle{remark}

\numberwithin{equation}{section}

\begin{document}

\title{Remarks on Jordan derivations over matrix algebras}

\author{Arindam Ghosh}
\address{Department of Mathematics, Indian Institute of Technology Patna, Patna-801106}
\curraddr{}
\email{E-mail: arindam.pma14@iitp.ac.in}
\thanks{}

\author{Om Prakash$^{\star}$}
\address{Department of Mathematics, Indian Institute of Technology Patna, Patna-801106}
\curraddr{}
\email{om@iitp.ac.in}
\thanks{* Corresponding author}

\subjclass[2010]{47B47, 47L35}

\keywords{Jordan Derivations, Derivations, Upper Triangular Matrix Algebra, Full Matrix Algebra}

\date{}

\dedicatory{}

\begin{abstract}
Let $C$ be a commutative ring with unity. In this article, we show that every Jordan derivation over an upper triangular matrix algebra $\mathcal{T}_n(C)$ is an inner derivation. Further, we extend the result for Jordan derivation on full matrix algebra $\mathcal{M}_n(C)$.
\end{abstract}

\maketitle

\section{Introduction}
Throughout this article $C$ denotes a commutative ring with unity, unless otherwise stated and $T$ a $C$-algebra. Recall that a map $D:T\rightarrow T$ is called a \emph{Jordan derivation} if it is $C$-linear and $D(a^2)=D(a)a+aD(a)$, for all $a\in T$. This is said to be a \emph{derivation} if $D(ab)=D(a)b+aD(b)$, for all $a, b\in T$ and an \emph{antiderivation} if $D(ab)=D(b)a+bD(a)$, for all $a, b\in T$.  Derivations and antiderivations are the trivial examples of Jordan derivations. But not every Jordan derivation is a derivation (Example of \cite{E}). A derivation $D: T\rightarrow T$ is said to be an \emph{inner derivation} if there exists $a_0\in T$ such that $D(a)=a_0a-aa_0$, for all $a\in T$.

The study of Jordan derivation was initiated by Herstein in 1957. In \cite{D}, he had shown that there is no proper Jordan derivation for a prime ring of characteristic not 2. In 1975, Cusack extended Herstein's result in \cite{D1}. Later on, in 1988 Bre$\check{s}$ar \cite{F} proved that every Jordan derivation from a 2-torsion free semiprime ring into itself is a derivation. Note that a ring $R$ is \emph{2-torsion free} if for $a \in R$ such that $2a=0_R$, then $a=0_R$. The problem whether every Jordan derivation of a ring or algebra into itself is a derivation was discussed by many mathematicians in \cite{ben,F2,F4}.

Now, suppose $A$ and $B$ are unital algebras over $C$, and $M$ a unital $(A,B)$-bimodule which is faithful as a left $A$-module and as a right $B$-module. Then the $C$-algebra
\[\text{Tri}(A,B,M)=\left \{\left( \begin{array}{ccc}
a & m \\
0 & b \end{array} \right)\vert \enspace a \in A,\, b \in B,\, m \in M \right \} \]
under the usual matrix operations is said to be a triangular algebra. In last few years Jordan derivation over triangular algebras has invited attention of many mathematicians. In 2005, Benkovi$\check{c}$ \cite{G} proved that every Jordan derivation from an upper triangular matrix algebra $\mathcal{A}$ into its arbitrary bimodule $\mathcal{M}$ is the sum of a derivation and an antiderivation, where $\mathcal{M}$ is 2-torsion free. In 2006, Zhang and Yu \cite{S} proved that every Jordan derivation from $U$ into itself is a derivation, where $U=\text{Tri}(A,B,M)$ is a triangular algebra and $C$ is 2-torsion free. Note that above result is not true if $C$ is not $2$-torsion free. In this connection an example of a Jordan derivation over $\text{Tri}(A,B,M)$ is given which is not a derivation. The construction of the example is same as in Example 8 by Cheung in \cite{che}, but in other perspective.
\begin{ex}
Let $C=\mathbb{Z}_2$, $A=B=\left \{\left( \begin{array}{ccc}
a & b \\
0 & a \end{array} \right)\vert \enspace a,~b  \in \mathbb{Z}_2 \right \}$ and $M=\mathcal{T}_2({\mathbb{Z}_2})$, where $\mathbb{Z}_2=\{0,1\}$, a field with two elements and $\mathcal{T}_2({\mathbb{Z}_2})$ is the algebra of $2\times 2$ matrices over $\mathbb{Z}_2$. Note that $\mathbb{Z}_2$ is not a $2$-torsion free ring. Define $D:\text{Tri}(A,B,M)\rightarrow \text{Tri}(A,B,M)$ as
\begin{align*}
& D(X)=a_{34}e_{12}+a_{24}e_{13}+a_{13}e_{24}+a_{12}e_{34}~\text{for all}~X\in \text{Tri}(A,B,M),\\
& \text{where}~X=a_{11}(e_{11}+e_{22})+a_{33}(e_{33}+e_{44})+a_{12}e_{12}+a_{13}e_{13}+a_{14}e_{14}+a_{24}e_{24}\\
&+a_{34}e_{34},~a_{ij}\in \mathbb{Z}_2.
\end{align*}
Here $e_{ij}$ represents the $n\times n$ matrix with $1$ at $(i,j)$th position and $0$ elsewhere. Then $D$ is a Jordan derivation over $\text{Tri}(A,B,M)$. Let $X=e_{11}+e_{13}+e_{14}+e_{22}+e_{33}+e_{34}+e_{44}$ and $Y=e_{13}+e_{14}+e_{33}+e_{34}+e_{44}$. In this case, $D(XY)=0\neq e_{12}=D(X)Y+XD(Y)$. Therefore, $D$ is not a derivation over $\text{Tri}(A,B,M)$.
\end{ex}

Motivated by above, in Section 2, we prove that a Jordan derivation from the algebra of upper triangular matrices $\mathcal{T}_n(C)$ into itself is a derivation, without assuming $C$ to be $2$-torsion free (Theorem \ref{3.1}). This is towards the furtherance of Theorem 2.1 of \cite{S}. Moreover, it has been proved that derivation on $\mathcal{T}_n(C)$ is an inner derivation (Theorem \ref{3.3}). In 2007 \cite{R}, Ghosseiri proved that if $R$ is a $2$-torsion free ring, $n \geq 2$, and $D$ is a Jordan derivation on the upper triangular matrix ring $\mathcal{T}_n(R)$, then $D$ is a derivation. As a consequence of Theorem \ref{3.1}, a remark is provided on Ghosseiri's result that Jordan derivation over $\mathcal{T}_n(F)$, where $F=\{0,1\}$ is not $2$- torsion free, is a derivation (Corollary \ref{3.2}).

In 2009 \cite{ali}, Alizadeh proved that if $A$ is a unital associative ring and $M$ is a $2$-torsion free $A$-bimodule, then every Jordan derivation from $\mathcal{M}_n(A)$ into $\mathcal{M}_n(M)$ is a derivation. In Section 3, we prove that there is no such proper Jordan derivations on the full matrix algebra $\mathcal{M}_n(C)$, without assuming $C$ is $2$-torsion free (Theorem \ref{4.1}). Further, it is proved that every derivation on $\mathcal{M}_n(C)$ is an inner derivation (Theorem \ref{4.2}).

Before stating the main results, we have the following:\\

Let $T$ be an algebra, $0_T$ and $1_T$ represent zero and identity of $T$ respectively. Similarly $0$ and $1$ represent the zero and identity of $C$ respectively. $e_{ij}$ denotes the square matrix $(e_{ij})_{n\times n}$ with $1$ at $(i,j)$th position and $0$ elsewhere. Let $D: T \rightarrow T$ be a Jordan derivation. Then $D(x^2)=D(x)x+xD(x)$ for all $x\in T$. Replacing $x$ by $x+y$, we have $D((x+y)^2)=D(x+y)(x+y)+(x+y)D(x+y)$, which implies for all $x,y \in T$,\begin{equation}\label{eq:1}
D(xy+yx)=D(x)y+xD(y)+D(y)x+yD(x).
\end{equation}
Since $D$ is additive,
\begin{equation}
\label{eq:2}
D(0_T)=0_T.
\end{equation}
Also, $D(1_T)=D(1_T^2)=D(1_T)1_T+1_TD(1_T)$, therefore
\begin{equation}
\label{eq:3}
D(1_T)=0_T.
\end{equation}

\section{Jordan Derivation on $\mathcal{T}_n(C)$}
In this section, we discuss about Jordan derivation which is an inner derivation over upper triangular matrix algebras and towards this we prove the following:

\begin{theorem}
\label{3.1}
Let $C$ be a commutative ring with unity and $\mathcal{T}_n(C)$ be an algebra of $n\times n$ upper triangular matrices over $C$. Then every Jordan derivation on $\mathcal{T}_n(C)$, $n \geq 2$ into itself is a derivation.
\end{theorem}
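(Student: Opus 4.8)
The plan is to prove the sharper assertion that $D$ is an \emph{inner} derivation; since inner derivations are derivations, this yields the theorem (and anticipates Theorem \ref{3.3}). The idea is to subtract two inner derivations from $D$ in succession and be left with the zero map. Every computation takes place against the $C$-spanning set $\{e_{ij}:1\le i\le j\le n\}$ of $\mathcal{T}_n(C)$, using repeatedly the linearized Jordan identity \eqref{eq:1}.

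\emph{First reduction (diagonal matrix units).} Write $D(e_{ii})=\sum_{k\le l}c^{i}_{kl}e_{kl}$ and substitute into $D(e_{ii}^{2})=D(e_{ii})e_{ii}+e_{ii}D(e_{ii})$. Comparing coefficients of the matrix units gives $c^{i}_{kl}=0$ whenever $k\neq i$ and $l\neq i$, and $c^{i}_{ii}=2c^{i}_{ii}$, hence $c^{i}_{ii}=0$ — the last step using only that $c=2c$ forces $c=0$ in the additive group of $C$. (This is precisely the spot at which the classical arguments for triangular algebras call on $2$-torsion freeness; it is not needed here.) Thus $D(e_{ii})$ is supported off the diagonal, in the $i$-th row and the $i$-th column. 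Now put $(x,y)=(e_{ii},e_{jj})$ with $i\neq j$ into \eqref{eq:1} (so that $e_{ii}e_{jj}+e_{jj}e_{ii}=0$); reading off coefficients shows, for $i<j$, that the $e_{ij}$-coefficient of $D(e_{jj})$ equals the negative of the $e_{ij}$-coefficient of $D(e_{ii})$. Calling the latter $a_{ij}$ and setting $a_{0}:=\sum_{i<j}a_{ij}e_{ij}\in\mathcal{T}_n(C)$, a direct check shows $D(e_{ii})=-(a_{0}e_{ii}-e_{ii}a_{0})$ for all $i$ (so, consistently with \eqref{eq:3}, $\sum_i D(e_{ii})=0$). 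Since a difference of Jordan derivations is a Jordan derivation and inner derivations are derivations, $D_{1}:=D+(x\mapsto a_{0}x-xa_{0})$ is a Jordan derivation with $D_{1}(e_{ii})=0$ for every $i$.

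\emph{Second reduction (off-diagonal matrix units).} Fix $i<j$. Applying \eqref{eq:1} to $(e_{ii},e_{ij})$ and to $(e_{jj},e_{ij})$ — using $e_{ii}e_{ij}+e_{ij}e_{ii}=e_{ij}=e_{jj}e_{ij}+e_{ij}e_{jj}$ and $D_{1}(e_{ii})=D_{1}(e_{jj})=0$ — forces $D_{1}(e_{ij})$ to be supported, off the diagonal, in row $i$ and column $i$ and also in row $j$ and column $j$; since $e_{ji}\notin\mathcal{T}_n(C)$, this leaves $D_{1}(e_{ij})=b_{ij}e_{ij}$ for some $b_{ij}\in C$. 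For $i<k<j$, applying \eqref{eq:1} to $(e_{ik},e_{kj})$ with $e_{ik}e_{kj}+e_{kj}e_{ik}=e_{ij}$ yields the cocycle relation $b_{ij}=b_{ik}+b_{kj}$; induction on $j-i$ then gives $b_{ij}=s_{j}-s_{i}$ with $s_{1}:=0$ and $s_{i}:=\sum_{l=1}^{i-1}b_{l,l+1}$. With $d:=-\sum_{i}s_{i}e_{ii}\in\mathcal{T}_n(C)$, the inner derivation $x\mapsto dx-xd$ agrees with $D_{1}$ on every $e_{ii}$ and every $e_{ij}$; as these units $C$-span $\mathcal{T}_n(C)$ and both maps are $C$-linear, $D_{1}(x)=dx-xd$ for all $x$. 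Undoing the first reduction, $D(x)=(d-a_{0})x-x(d-a_{0})$ is an inner derivation, in particular a derivation.

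The one real obstacle is to run the entire argument without ever dividing by $2$, so that $C$ may be arbitrary; the abundance of matrix units makes this possible, collapsing the delicate step to the trivial implication $c=2c\Rightarrow c=0$. The remainder is bookkeeping, the only subtlety being that one should check the consistency relations ($\sum_iD(e_{ii})=0$ and the cocycle identity for the $b_{ij}$) directly rather than via a Peirce-decomposition shortcut.
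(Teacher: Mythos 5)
Your argument is correct, but it takes a genuinely different route from the paper's. The paper proves the derivation property directly: it computes $D(e_{ii})$ and $D(e_{ij})$ explicitly (its \eqref{eq:4} and \eqref{eq:6}), then verifies case by case that $D(e_{ij}e_{kl})=D(e_{ij})e_{kl}+e_{ij}D(e_{kl})$ for every pair of matrix units, and finally extends by bilinearity; innerness is deferred to a separate argument (Theorem \ref{3.3}), where a commuting element $B$ is exhibited from the coefficient identities. You instead prove the stronger statement (innerness, i.e.\ Corollary \ref{3.4}) in one pass by twice subtracting an inner derivation: first a strictly upper triangular $a_0$ built from the off-diagonal data of the $D(e_{ii})$ (your relation $c^i_{ij}+c^j_{ij}=0$, obtained from \eqref{eq:1} applied to $(e_{ii},e_{jj})$, is the paper's \eqref{eq:5}, which it derives from $D(1_T)=0_T$ instead), and then a diagonal $d$ obtained by integrating the cocycle relation $b_{ij}=b_{ik}+b_{kj}$. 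What your approach buys is the elimination of the paper's case analysis and of the separate innerness theorem: once $D_1(e_{ii})=0$ and $D_1(e_{ij})=b_{ij}e_{ij}$ with the cocycle identity, agreement of $D_1$ with $\mathrm{ad}_d$ on a spanning set finishes everything. What the paper's approach buys is the explicit formulas \eqref{eq:4}--\eqref{eq:6} for $D$ on all matrix units, which it then reuses verbatim in Section 3 for the full matrix algebra. Both arguments isolate the same crucial point, namely that the only place $2$-torsion could enter is the cancellation $c=2c\Rightarrow c=0$, which holds in any abelian group; your write-up makes this more visible.
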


\begin{proof}
Let $T=\mathcal{T}_n(C)$ and $D$ be a Jordan derivation from $T$ into itself. Let
\begin{eqnarray}
\label{eq:3a}
D(e_{ii})=\sum\limits_{1 \leq j \leq k \leq n}a_{jk}^{(ii)}e_{jk}~, ~\text{where} ~a_{jk}^{(ii)} \in C.
\end{eqnarray}
Since $D$ is a Jordan derivation, $D(e_{ii})=D(e_{ii}^2)=D(e_{ii})e_{ii}+e_{ii}D(e_{ii})$. Now, by using \eqref{eq:3a} and equating the coefficients from both sides, we obtain
\begin{equation}
\label{eq:4}
\begin{aligned}
D(e_{ii})=a_{1i}^{(ii)}e_{1i}+a_{2i}^{(ii)}e_{2i}+a_{3i}^{(ii)}e_{3i}+\cdots+a_{i-1,i}^{(ii)}e_{i-1,i}+a_{i,i+1}^{(ii)}e_{i,i+1}+\cdots+a_{in}^{(ii)}e_{in}\\
\text{for all} ~i\in \{1,2,3,\cdots,n\}.
\end{aligned}
\end{equation}

 Also, from \eqref{eq:3}, we have $D(1_T)=0_T$. Therefore, by using \eqref{eq:4}, we have $\sum\limits_{1 \leq i < j \leq n}(a_{ij}^{(ii)}+a_{ij}^{(jj)})e_{ij}=0_T$. Hence,
\begin{eqnarray}
\label{eq:5}
a_{ij}^{(ii)}+a_{ij}^{(jj)}=0~, ~\text{for} ~1 \leq i<j \leq n.
\end{eqnarray}

In order to find $D(e_{ij})$, when $1 \leq i < j\leq n$, let $D(e_{ij})=\sum\limits_{1 \leq k \leq l \leq n}a_{kl}^{(ij)}e_{kl}$, for $a_{kl}^{(ij)} \in C$ and $1 \leq i < j\leq n$. Now, by applying \eqref{eq:1} on $e_{ij}=e_{ii}e_{ij}+e_{ij}e_{ii}$, we have $D(e_{ij})=D(e_{ii})e_{ij}+e_{ii}D(e_{ij})+D(e_{ij})e_{ii}+e_{ij}D(e_{ii})$. Also, by \eqref{eq:4}, we get,
\begin{equation}
\label{eq:5a}
\begin{aligned}
D(e_{ij}) &= a_{1i}^{(ii)}e_{1j} + a_{2i}^{(ii)}e_{2j} + a_{3i}^{(ii)}e_{3j} + \cdots + a_{i-1,i}^{(ii)}e_{i-1,j} \\
&+ a_{1i}^{(ij)}e_{1i} + a_{2i}^{(ij)}e_{2i} + \cdots + a_{i-1,i}^{(ij)}e_{i-1,i} + a_{i,i+1}^{(ij)}e_{i,i+1} + \cdots + a_{in}^{(ij)}e_{in}.
\end{aligned}
\end{equation}

\vspace{0.2cm}
Since $e_{ij}=e_{ij}e_{jj}+e_{jj}e_{ij}$. Therefore, by using \eqref{eq:1} and putting the value of $D(e_{ij})$ from \eqref{eq:5a} and $D(e_{jj})$ from \eqref{eq:4}, we have
\begin{equation}
\label{eq:6a}
D(e_{ij})=a_{1i}^{(ii)}e_{1j} + a_{2i}^{(ii)}e_{2j} + a_{3i}^{(ii)}e_{3j} + \cdots + a_{i-1,i}^{(ii)}e_{i-1,j} + a_{ij}^{(ij)}e_{ij} + a_{j,j+1}^{(jj)}e_{i,j+1} + \cdots + a_{jn}^{(jj)}e_{in}.
\end{equation}

Now from the assumption of $D(e_{ij})$ and \eqref{eq:6a},
\begin{equation}
\label{eq:6}
D(e_{ij})=a_{1i}^{(ii)}e_{1j} + a_{2i}^{(ii)}e_{2j} + a_{3i}^{(ii)}e_{3j} + \cdots + a_{i-1,i}^{(ii)}e_{i-1,j} + a_{ij}^{(ij)}e_{ij} + a_{i,j+1}^{(ij)}e_{i,j+1} + \cdots + a_{in}^{(ij)}e_{in}.
\end{equation}

Now, we establish
\begin{equation}
\label{eq:7}
D(e_{ij}e_{kl})=D(e_{ij})e_{kl}+e_{ij}D(e_{kl})
\end{equation}
for all $i,j,k,l\in \{1,2,3,\cdots,n\}$. Since \eqref{eq:7} is equivalent to
\begin{equation}
\label{eq:8}
D(e_{kl}e_{ij})=D(e_{kl})e_{ij}+e_{kl}D(e_{ij}).
\end{equation}
Therefore, proof of \eqref{eq:7} is sufficient to justify \eqref{eq:8} and vice-versa. During the proof, we frequently use \eqref{eq:4} and \eqref{eq:6}.

\underline{Case 1:} Let $i\neq j$. Without loss of generality, let $i>j$. Since from \eqref{eq:4}, we have
\begin{align*}
D(e_{ii})e_{jj}+e_{ii}D(e_{jj})=0_T.
\end{align*}
Therefore, $D(e_{ii}e_{jj})=D(e_{ii})e_{jj}+e_{ii}D(e_{jj})$.

\underline{Case 2:} Let $j<k$. In this case we want to establish $D(e_{ii}e_{jk})=D(e_{ii})e_{jk}+e_{ii}D(e_{jk})$ for all $i,j,k\in \{1,2,3,\cdots,n\}$.

If $i<j$, then by using \eqref{eq:5},
\begin{align*}
D(e_{ii})e_{jk}+e_{ii}D(e_{jk})=(a_{ij}^{(ii)}+a_{ij}^{(jj)})e_{ik}=0_T.
\end{align*}

If $i=j$, then
\begin{align*}
D(e_{ik})e_{ii}+e_{ik}D(e_{ii})=0_T.
\end{align*}

If $i>j$, then
\begin{align*}
D(e_{ii})e_{jk}+e_{ii}D(e_{jk})=0_T.
\end{align*}

\underline{Case 3:} Let $i<j$ and $k<l$. Now, we prove $D(e_{ij}e_{kl})=D(e_{ij})e_{kl}+e_{ij}D(e_{kl})$ for all $i,j,k,l\in \{1,2,3,\cdots,n\}$.

For $j<k$,
\begin{align*}
D(e_{kl})e_{ij}+e_{kl}D(e_{ij})=0_T.
\end{align*}

If $j=k$, then
\begin{align*}
D(e_{kl})e_{ik}+e_{kl}D(e_{ik})=0_T.
\end{align*}

Also, for $j>k$,
\begin{align*}
D(e_{ij})e_{kl}+e_{ij}D(e_{kl})=0_T.
\end{align*}

 Now to prove $D$ is a derivation on $T$, let $A$, $B$ $\in T$, where $A=\sum\limits_{1 \leq i \leq j \leq n}A_{ij}e_{ij}$ and $B=\sum\limits_{1 \leq k \leq l \leq n}B_{kl}e_{kl}$ for some $A_{ij}, B_{kl} \in C$. Since $D$ is a Jordan derivation on $T$ and a derivation on $e_{ij}$'s. Hence
\begin{align*}
D(A)B+AD(B) &=  \sum\limits_{1 \leq i \leq j \leq n} \sum\limits_{1 \leq k \leq l \leq n}A_{ij}B_{kl}[D(e_{ij})e_{kl}+e_{ij}D(e_{kl})] \\
            &=  \sum\limits_{1 \leq i \leq j \leq n} \sum\limits_{1 \leq k \leq l \leq n}A_{ij}B_{kl}D(e_{ij}e_{kl}) \\
            &=  D(\sum\limits_{1 \leq i \leq l \leq n}C_{il}e_{il})\\
            &=  D(AB),
\end{align*}
where $C_{il}=\sum_{j=i}^{l}A_{ij}B_{jl}$. Thus, $D$ is a derivation on $T$.
\end{proof}

Now as a corollary, we describe Jordan derivation on $\mathcal{T}_n(F)$, where $n \geq 2$ is a positive integer and $\mathcal{T}_n(F)$ is considered as a ring. In this case, we relax the linearity condition of the map.

\begin{cor}
\label{3.2}
If $F$ is a field with two elements, then every Jordan derivation on $\mathcal{T}_n(F)$, $n \geq 2$, into itself is a derivation.
\end{cor}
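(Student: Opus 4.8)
The plan is to reduce Corollary \ref{3.2} to Theorem \ref{3.1} by showing that, over the two-element field $F=\{0,1\}$, any map $D:\mathcal{T}_n(F)\to\mathcal{T}_n(F)$ satisfying the Jordan identity $D(a^2)=D(a)a+aD(a)$ together with additivity is automatically $C$-linear, so that it qualifies as a Jordan derivation in the sense of Theorem \ref{3.1}. The subtlety flagged in the statement is that a ``Jordan derivation on a ring'' is only assumed additive, whereas Theorem \ref{3.1} was proved for $C$-linear Jordan derivations; the whole content of the corollary is that this gap is vacuous when $C=F$.

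First I would observe that $\mathcal{T}_n(F)$ is an algebra over $F=\mathbb{Z}_2$, and that any additive map between $\mathbb{Z}_2$-modules is automatically $\mathbb{Z}_2$-linear: the only scalars are $0$ and $1$, and $D(0\cdot a)=D(0_T)=0_T=0\cdot D(a)$ by \eqref{eq:2}, while $D(1\cdot a)=D(a)=1\cdot D(a)$ trivially. Hence every additive map is $F$-linear, so ``additive Jordan derivation'' and ``$F$-linear Jordan derivation'' coincide over $F$. Thus an arbitrary ring-theoretic Jordan derivation $D$ on $\mathcal{T}_n(F)$ satisfies precisely the hypotheses of Theorem \ref{3.1} with $C=F$.

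Next, I would note that $F=\{0,1\}$ is a commutative ring with unity (indeed a field with two elements), so Theorem \ref{3.1} applies verbatim with $C=F$ and $n\ge 2$. Applying it, $D$ is a derivation on $\mathcal{T}_n(F)$. This completes the argument; the proof is essentially a one-line invocation once the linearity observation is recorded. I anticipate no real obstacle: the only point requiring care is the explicit remark that additivity forces linearity over $\mathbb{Z}_2$, which is exactly what lets us drop the linearity hypothesis from the corollary's statement, and this is why the example with $C=\mathbb{Z}_2$ earlier in the paper is consistent with the corollary — that example concerns $\mathrm{Tri}(A,B,M)$ with a nontrivial bimodule $M$, not $\mathcal{T}_n(F)$ itself.
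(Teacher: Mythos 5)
Your proposal is correct and follows exactly the paper's own argument: the paper likewise notes that since $F=\{0,1\}$ an additive Jordan derivation is automatically $F$-linear, and then invokes Theorem \ref{3.1}. You merely spell out the linearity observation in more detail than the paper does.
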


\begin{proof}
Let $D$ is a Jordan derivation on $\mathcal{T}_n(F)$. Since $F=\{0,1\}$, $D$ is $F$-linear. Hence $D$ is a derivation by Theorem \ref{3.1}.
\end{proof}

\begin{theorem}
\label{3.3}
Every derivation over $\mathcal{T}_n(C),~n\geq 2,$ is an inner derivation.
\end{theorem}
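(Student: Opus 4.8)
The plan is to exhibit, for a given derivation $D$ on $T=\mathcal{T}_n(C)$, an element $M\in T$ with $D(X)=MX-XM$ for all $X\in T$; write $\delta_M(X):=MX-XM$ for the associated inner derivation. I would build $M$ in two stages — a strictly upper triangular piece $M_1$ that reproduces $D$ on the diagonal matrix units, and a diagonal piece $M_2$ that takes care of what is left.

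First, since a derivation is in particular a Jordan derivation, equations \eqref{eq:4} and \eqref{eq:5} from the proof of Theorem \ref{3.1} apply: $D(e_{ii})=\sum_{k<i}a^{(ii)}_{ki}e_{ki}+\sum_{l>i}a^{(ii)}_{il}e_{il}$ and $a^{(ii)}_{ij}+a^{(jj)}_{ij}=0$ for $i<j$. I would set $M_1:=\sum_{1\le p<q\le n}a^{(qq)}_{pq}e_{pq}\in T$ and compute $\delta_{M_1}(e_{ii})=M_1e_{ii}-e_{ii}M_1=\sum_{p<i}a^{(ii)}_{pi}e_{pi}-\sum_{q>i}a^{(qq)}_{iq}e_{iq}$; using \eqref{eq:5} to replace $a^{(qq)}_{iq}$ by $-a^{(ii)}_{iq}$ turns this into exactly $D(e_{ii})$. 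Thus $D_1:=D-\delta_{M_1}$ is again a derivation, now with $D_1(e_{ii})=0$ for every $i$.

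Next I would determine $D_1$ on the off-diagonal units. For $i<j$, applying $D_1$ to $e_{ij}=e_{ii}e_{ij}$ and to $e_{ij}=e_{ij}e_{jj}$ and using $D_1(e_{ii})=D_1(e_{jj})=0$ forces $D_1(e_{ij})=e_{ii}D_1(e_{ij})e_{jj}=c_{ij}e_{ij}$ for some $c_{ij}\in C$. Applying $D_1$ to $e_{ij}=e_{ik}e_{kj}$ for $i<k<j$ gives the telescoping identity $c_{ij}=c_{ik}+c_{kj}$, hence $c_{ij}=\sum_{l=i}^{j-1}c_{l,l+1}$. Setting $d_1:=0$ and $d_{k+1}:=d_k-c_{k,k+1}$ makes $d_i-d_j=c_{ij}$ for all $i\le j$; then $M_2:=\sum_i d_i e_{ii}\in T$ satisfies $\delta_{M_2}(e_{ij})=(d_i-d_j)e_{ij}=D_1(e_{ij})$ for $i<j$ and $\delta_{M_2}(e_{ii})=0=D_1(e_{ii})$. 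Since the $e_{ij}$ with $i\le j$ form a $C$-basis of $T$ and both maps are $C$-linear, $D_1=\delta_{M_2}$, and therefore $D=\delta_{M_1}+\delta_{M_2}=\delta_{M_1+M_2}$ is inner.

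The only delicate point is the first stage: $M_1$ is defined by a scalar system in which each off-diagonal entry $m_{pq}$ is constrained both by $\delta_{M_1}(e_{pp})$ and by $\delta_{M_1}(e_{qq})$, and these two constraints are compatible precisely because of \eqref{eq:5}. So the essential use of the hypotheses is already packaged in Theorem \ref{3.1}; once $M_1$ is in hand, the remaining work is the routine bookkeeping with matrix units and the telescoping sum described above.
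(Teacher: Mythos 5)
Your proof is correct and, despite the two-stage packaging, is essentially the paper's argument: your $M_1+M_2$ equals exactly the element $B=\sum_{l=2}^{n}(-a_{1l}^{(1l)})e_{ll}+\sum_{1\le i<j\le n}a_{ij}^{(jj)}e_{ij}$ used in the paper's proof, and your telescoping identity $c_{ij}=c_{ik}+c_{kj}$ is precisely the paper's relation \eqref{8b}. The only difference is presentational: the paper writes down $B$ at once and verifies $D(X)=BX-XB$, while you reach the same $B$ by first correcting $D$ on the diagonal idempotents via \eqref{eq:4} and \eqref{eq:5} and then on the off-diagonal units.
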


\begin{proof}
Let $D$ be a derivation on $\mathcal{T}_n(C)$. Since every derivation is a Jordan derivation, all the identities in the proof of Theorem \ref{3.1} hold.

Let $i<j<k$. Since $e_{ik}=e_{ij}e_{jk}$ and $D$ is a derivation on $\mathcal{T}_n(C)$, we have
$D(e_{ik})=D(e_{ij})e_{jk}+e_{ij}D(e_{jk})$. By \eqref{eq:6}, equating the coefficient of $e_{ik}$ from both sides,
\begin{equation}
\label{8b}
a_{ik}^{(ik)}=a_{ij}^{(ij)}+a_{jk}^{(jk)}.
\end{equation}

Let $X=\sum\limits_{1 \leq j \leq k \leq n}x_{jk}e_{jk}$, where $x_{jk} \in C$. Now, by using \eqref{eq:4}, \eqref{eq:5}, \eqref{eq:6} and \eqref{8b}, $D(X)=BX-XB$, where
\begin{align*}
B=\sum_{l=2}^{n}(-a_{1l}^{(1l)})e_{ll}+\sum\limits_{1 \leq i < j \leq n}a_{ij}^{(jj)}e_{ij}.
\end{align*}
So, $D$ is an inner derivation.

\end{proof}

As a consequence of Theorem \ref{3.1} and \ref{3.3}, we have the following:

\begin{cor}
\label{3.4}
Every Jordan derivation over $\mathcal{T}_n(C),~n\geq 2,$ is an inner derivation.
\end{cor}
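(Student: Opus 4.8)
\textbf{Proof proposal for Corollary \ref{3.4}.} The plan is simply to chain together the two main results of this section. Given a Jordan derivation $D$ on $\mathcal{T}_n(C)$ with $n\geq 2$, Theorem \ref{3.1} tells us that $D$ is in fact a (two-sided) derivation on $\mathcal{T}_n(C)$. Once we know $D$ is a derivation, Theorem \ref{3.3} applies verbatim and produces an element $B\in\mathcal{T}_n(C)$ with $D(X)=BX-XB$ for all $X$; explicitly one may take $B=\sum_{l=2}^{n}(-a_{1l}^{(1l)})e_{ll}+\sum_{1\leq i<j\leq n}a_{ij}^{(jj)}e_{ij}$ in the notation of the proof of Theorem \ref{3.1}. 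Hence $D$ is an inner derivation, which is the assertion.

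Since both ingredients are already established, there is essentially no obstacle to overcome here: the corollary is a direct composition of Theorem \ref{3.1} and Theorem \ref{3.3}, and the only thing to check is that the hypotheses match (they do: both theorems assume $C$ commutative with unity and $n\geq 2$, with no torsion hypothesis). If one wished to make the statement self-contained, the mildly non-routine point would be re-deriving the explicit form of $B$ directly from the Jordan-derivation identities rather than quoting Theorem \ref{3.3}; but given the preceding results, quoting them is the clean route. One could also remark, as the paper does after Corollary \ref{3.2}, that the same conclusion holds when $C$ is the two-element field viewed as a ring (dropping the a priori linearity assumption), since over $\{0,1\}$ additivity forces $C$-linearity.

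\begin{cor*}[restatement]
Every Jordan derivation over $\mathcal{T}_n(C)$, $n\geq 2$, is an inner derivation.
\end{cor*}

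\begin{proof}
Let $D$ be a Jordan derivation on $\mathcal{T}_n(C)$. By Theorem \ref{3.1}, $D$ is a derivation, and then by Theorem \ref{3.3}, $D$ is an inner derivation.
\end{proof}
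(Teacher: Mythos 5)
Your proof is correct and matches the paper's own argument exactly: the corollary is stated there as an immediate consequence of Theorem \ref{3.1} (Jordan derivation implies derivation) followed by Theorem \ref{3.3} (derivation implies inner). Nothing further is needed.
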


Let $D$ be a Jordan derivation on $\mathcal{T}_n(C)$. The question is whether there exists a unique $B\in \mathcal{T}_n(C)$ so that $D(X)=BX-XB$, for all $X\in \mathcal{T}_n(C)$. The answer is given by the following example.
\begin{ex}
Define $D:\mathcal{T}_2(\mathbb{Z}) \rightarrow \mathcal{T}_2(\mathbb{Z})$ by \\
\[D \begin{pmatrix}
x_{11}&x_{12}\\
0&x_{22}
\end{pmatrix}= \begin{pmatrix}
0&x_{12}\\
0&0
\end{pmatrix},\\
~\text{where}~x_{ij}\in \mathbb{Z},~\text{the ring of integers}. \]
By easy computation, $D$ is a Jordan derivation. Also, by Corollary \ref{3.4}, $D$ is inner. Note that $D(X)=BX-XB$ for $B = e_{11}$ or $B = -e_{22}$. Hence, we have more than one choices in this case for $B$.
\end{ex}


\section{ Jordan Derivations on $\mathcal{M}_n(C)$}
Now, we state and prove our main theorem of Jordan derivation on full matrix algebra.

\begin{theorem}
\label{4.1}
Let $C$ be a commutative ring with unity and $\mathcal{M}_n(C)$ be the full matrix algebra over $C$. Then every Jordan derivation on $\mathcal{M}_n(C)$, $n \geq 2$ into itself is a derivation.
\end{theorem}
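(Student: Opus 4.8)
\textbf{Proof proposal for Theorem \ref{4.1}.}

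The plan is to mimic the strategy used for $\mathcal{T}_n(C)$ in Theorem \ref{3.1}: reduce the statement to checking that $D$ behaves like a derivation on the matrix units $e_{ij}$, and then extend by $C$-bilinearity. First I would compute $D(e_{ii})$. Since $e_{ii}^2=e_{ii}$, applying the Jordan identity $D(e_{ii})=D(e_{ii})e_{ii}+e_{ii}D(e_{ii})$ and comparing entries forces $D(e_{ii})$ to have support only in row $i$ and column $i$, i.e. $D(e_{ii})=\sum_{k\neq i}(\alpha_k^{(i)}e_{ik}+\beta_k^{(i)}e_{ki})$ for suitable scalars; and $\sum_i D(e_{ii})=D(1_T)=0_T$ by \eqref{eq:3} gives relations among these. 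The key new tool in the full-matrix case, absent from the triangular one, is linearization \eqref{eq:1} applied to the \emph{off-diagonal idempotents}: for $i\neq j$ the element $f=e_{ii}+e_{ij}$ satisfies $f^2=f$, and $g=e_{ii}+e_{ji}$ satisfies $g^2=g$; feeding these (and $e_{ij}+e_{ji}$, whose square is $e_{ii}+e_{jj}$) into the Jordan identity pins down $D(e_{ij})$ for $i\neq j$ in terms of the diagonal data, exactly as \eqref{eq:5a}--\eqref{eq:6} do in the triangular setting but now with no ordering restriction on the indices.

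Next I would prove the analogue of \eqref{eq:7}, namely $D(e_{ij}e_{kl})=D(e_{ij})e_{kl}+e_{ij}D(e_{kl})$ for all $i,j,k,l$. As in Theorem \ref{3.1} this splits into cases according to whether $j=k$ or $j\neq k$ and whether the indices coincide with the diagonal ones; each case is a direct computation comparing the explicit formula for $D$ on matrix units against the product. The identity \eqref{eq:1} guarantees it suffices to verify \eqref{eq:7} up to the symmetry $(i,j,k,l)\leftrightarrow(k,l,i,j)$, which halves the casework. Once \eqref{eq:7} holds on all matrix units, writing $A=\sum A_{ij}e_{ij}$, $B=\sum B_{kl}e_{kl}$ and expanding $D(A)B+AD(B)$ by $C$-bilinearity collapses to $\sum A_{ij}B_{kl}D(e_{ij}e_{kl})=D(AB)$, exactly as in the last display of the proof of Theorem \ref{3.1}, so $D$ is a derivation.

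The main obstacle is the step extracting $D(e_{ij})$ for $i\neq j$: in the upper-triangular algebra one gets to use $e_{ij}=e_{ii}e_{ij}+e_{ij}e_{ii}=e_{ij}e_{jj}+e_{jj}e_{ij}$ and the fact that many products of matrix units vanish because they fall below the diagonal, which is what kills the would-be ``antiderivation'' part and avoids any $2$-torsion hypothesis. In $\mathcal{M}_n(C)$ nothing vanishes for free, so one must instead exploit the genuine idempotents $e_{ii}+e_{ij}$ and the square $(e_{ij}+e_{ji})^2=e_{ii}+e_{jj}$; the delicate point is checking that the resulting linear system over $C$ determines $D(e_{ij})$ uniquely \emph{without} ever dividing by $2$. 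I expect this to work because the relevant equations, after substituting the row/column form of $D(e_{ii})$, turn out to be solvable purely by comparing coefficients of matrix units, never forcing a ``$2x=y$'' with $y$ odd. A secondary bookkeeping point is that $D(e_{ij})$ should come out with support confined to row $i$ and column $j$ plus the single entry $e_{ij}$ itself, and one should record the resulting scalar relations (the analogues of \eqref{eq:5}, \eqref{8b}) carefully so that the product identity \eqref{eq:7} checks out in every case.
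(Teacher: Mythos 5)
Your plan coincides with the paper's proof: determine $D(e_{ii})$ from the Jordan identity and $D(1_T)=0_T$, pin down $D(e_{ij})$ for $i\neq j$ from $e_{ij}^2=0_T$ together with the linearized identity \eqref{eq:1} applied to $e_{ij}=e_{ii}e_{ij}+e_{ij}e_{ii}=e_{ij}e_{jj}+e_{jj}e_{ij}$ (which is exactly what your idempotents $e_{ii}+e_{ij}$ encode), then verify \eqref{eq:15} case by case and extend by bilinearity. Your anticipated $2$-torsion worry is resolved in the paper precisely as you predict, via the coefficient equation $a_{ii}^{(ij)}=2a_{ii}^{(ij)}+a_{ji}^{(ii)}$ in \eqref{eq:12c}, which yields $a_{ii}^{(ij)}+a_{ji}^{(ii)}=0$ by subtraction rather than division.
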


\begin{proof}
Let $T=\mathcal{M}_n(C)$, $D$ be a Jordan derivation from $T$ into itself. Let
\begin{eqnarray}
\label{eq:8a}
D(e_{ii})=\sum_{k=1}^{n}\sum_{l=1}^{n}a_{kl}^{(ii)}e_{kl}~, ~\text{for} ~a_{kl}^{(ii)} \in C.
\end{eqnarray}
Since $D$ is a Jordan derivation, $D(e_{ii})=D(e_{ii}^2)=D(e_{ii})e_{ii}+e_{ii}D(e_{ii})$. Now, by using \eqref{eq:8a} and equating the coefficients from both sides, we obtain,
\begin{equation}
\label{eq:9}
\begin{aligned}
D(e_{ii}) &= a_{i1}^{(ii)}e_{i1}+ \cdots +a_{i,i-1}^{(ii)}e_{i,i-1}+a_{i,i+1}^{(ii)}e_{i,i+1}+ \cdots +a_{in}^{(ii)}e_{in} \\
      &+ a_{1i}^{(ii)}e_{1i}+ \cdots +a_{i-1,i}^{(ii)}e_{i-1,i}+a_{i+1,i}^{(ii)}e_{i+1,i}+ \cdots +a_{ni}^{(ii)}e_{ni} \\
      &\text{for all} ~i\in \{1,2,3,\cdots,n\}.
\end{aligned}
\end{equation}

Also,
\begin{equation}
\label{eq:10}
\begin{aligned}
D(1_T)=0_T & \implies \mathop {\sum_{k=1}^{n}\sum_{l=1}^{n}}_{k\neq l}(a_{kl}^{(kk)}+a_{kl}^{(ll)})e_{kl}=0_T \\
& \implies a_{kl}^{(kk)}+a_{kl}^{(ll)}=0
\end{aligned}
\end{equation}
for all $k,l$ with $k\neq l$, by using \eqref{eq:3} and \eqref{eq:9}.

In order to find $D(e_{ij})$ for $i\neq j$, let
\begin{eqnarray}
\label{eq:10a}
D(e_{ij})=\sum_{k=1}^{n}\sum_{l=1}^{n}a_{kl}^{(ij)}e_{kl}~, ~\text{for} ~a_{kl}^{(ij)} \in C ~\text{and} ~i\neq j.
\end{eqnarray}

From \eqref{eq:2}, we have the identity $D(e_{ij})e_{ij}+e_{ij}D(e_{ij})=D(e_{ij}^{2})=D(0_T)=0_T$. By equating the coefficients of $e_{ij}$ and $e_{ii}$, we obtain
\begin{equation}
\label{eq:11a}
a_{ii}^{(ij)}+a_{jj}^{(ij)}=0
\end{equation}
and
\begin{equation}
\label{eq:12}
a_{ji}^{(ij)}=0
\end{equation}
respectively. Now, applying \eqref{eq:1} on $e_{ij}=e_{ii}e_{ij}+e_{ij}e_{ii}$, we get $D(e_{ij})=D(e_{ii})e_{ij}+e_{ii}D(e_{ij})+D(e_{ij})e_{ii}+e_{ij}D(e_{ii})$. Using \eqref{eq:9} and \eqref{eq:10a},
\begin{equation}
\begin{aligned}
\label{eq:12b}
D(e_{ij}) &= a_{1i}^{(ii)}e_{1j}+ \cdots +a_{i-1,i}^{(ii)}e_{i-1,j}+a_{i+1,i}^{(ii)}e_{i+1,j}+ \cdots +a_{ni}^{(ii)}e_{nj} \\
& + a_{i1}^{(ij)}e_{i1}+ \cdots +a_{i,i-1}^{(ij)}e_{i,i-1}+a_{ii}^{(ij)}e_{ii}+a_{i,i+1}^{(ij)}e_{i,i+1}+ \cdots +a_{in}^{(ij)}e_{in} \\
& + a_{1i}^{(ij)}e_{1i}+ \cdots +a_{i-1,i}^{(ij)}e_{i-1,i}+a_{ii}^{(ij)}e_{ii}+a_{i+1,i}^{(ij)}e_{i+1,i}+ \cdots +a_{ni}^{(ij)}e_{ni} \\
&+a_{ji}^{(ii)}e_{ii}.
\end{aligned}
\end{equation}

Now, from \eqref{eq:10a} and \eqref{eq:12b}, equating the coefficient of $e_{ii}$,
\begin{equation}
\begin{aligned}
\label{eq:12c}
a_{ii}^{(ij)}=2a_{ii}^{(ij)}+a_{ji}^{(ii)}.
\end{aligned}
\end{equation}

Therefore, from \eqref{eq:12b} and \eqref{eq:12c},
\begin{equation}
\begin{aligned}
\label{eq:12a}
D(e_{ij}) &= a_{1i}^{(ii)}e_{1j}+ \cdots +a_{i-1,i}^{(ii)}e_{i-1,j}+a_{i+1,i}^{(ii)}e_{i+1,j}+ \cdots +a_{ni}^{(ii)}e_{nj} \\
& + a_{i1}^{(ij)}e_{i1}+ \cdots +a_{i,i-1}^{(ij)}e_{i,i-1}+a_{ii}^{(ij)}e_{ii}+a_{i,i+1}^{(ij)}e_{i,i+1}+ \cdots +a_{in}^{(ij)}e_{in} \\
& + a_{1i}^{(ij)}e_{1i}+ \cdots +a_{i-1,i}^{(ij)}e_{i-1,i}+a_{i+1,i}^{(ij)}e_{i+1,i}+ \cdots +a_{ni}^{(ij)}e_{ni}.
\end{aligned}
\end{equation}

Since $e_{ij}=e_{ij}e_{jj}+e_{jj}e_{ij}$. So by using \eqref{eq:1}, \eqref{eq:10} and \eqref{eq:12} and putting the values of $D(e_{ij})$ and $D(e_{jj})$ from \eqref{eq:12a} and \eqref{eq:9} respectively, we have
\begin{equation}
\label{eq:13a}
\begin{aligned}
D(e_{ij})  &= a_{1i}^{(ii)}e_{1j}+ \cdots +a_{i-1,i}^{(ii)}e_{i-1,j}+a_{i+1,i}^{(ii)}e_{i+1,j}+ \cdots +a_{ni}^{(ii)}e_{nj}+a_{ij}^{(ij)}e_{ij} \\
& + a_{j1}^{(jj)}e_{i1}+ \cdots +a_{j,j-1}^{(jj)}e_{i,j-1}+a_{j,j+1}^{(jj)}e_{i,j+1}+ \cdots +a_{jn}^{(jj)}e_{in}.
\end{aligned}
\end{equation}

Also, from \eqref{eq:10a} and \eqref{eq:13a},
\begin{equation}
\label{eq:13}
\begin{aligned}
D(e_{ij})  &= a_{1i}^{(ii)}e_{1j}+ \cdots +a_{i-1,i}^{(ii)}e_{i-1,j}+a_{i+1,i}^{(ii)}e_{i+1,j}+ \cdots +a_{ni}^{(ii)}e_{nj} \\
& + a_{i1}^{(ij)}e_{i1}+ \cdots +a_{i,i-1}^{(ij)}e_{i,i-1}+a_{ii}^{(ij)}e_{ii}+a_{i,i+1}^{(ij)}e_{i,i+1}+ \cdots +a_{in}^{(ij)}e_{in}
\end{aligned}
\end{equation}

and
\begin{equation}
\label{eq:14}
a_{il}^{(ij)}=a_{jl}^{(jj)}~, ~\text{for all} ~l\in \{1,2,\cdots,j-1,j+1,\cdots,n\}.
\end{equation}

Now, equating the coefficient of $e_{jj}$ from \eqref{eq:10a} and \eqref{eq:13},
\begin{equation}
\label{eq:14a}
a_{jj}^{(ij)}=a_{ji}^{(ii)}
\end{equation}

Again, from \eqref{eq:11a} and \eqref{eq:14a},
\begin{equation}
\label{eq:11}
a_{ii}^{(ij)}+a_{ji}^{(ii)}=0.
\end{equation}

Now, we establish
\begin{equation}
\label{eq:15}
D(e_{ij}e_{kl})=D(e_{ij})e_{kl}+e_{ij}D(e_{kl})
\end{equation}
for all $i,j,k,l\in \{1,2,3,\cdots,n\}$. Since \eqref{eq:15} is equivalent to
\begin{equation}
\label{eq:16}
D(e_{kl}e_{ij})=D(e_{kl})e_{ij}+e_{kl}D(e_{ij}).
\end{equation}
Therefore, proof of \eqref{eq:15} is sufficient to justify \eqref{eq:16} and vice-versa. During the proof, we frequently use \eqref{eq:9} and \eqref{eq:13}.

\underline{Case 1:} Let $i\neq j$. Since from \eqref{eq:9} and \eqref{eq:10}, we have
\begin{align*}
D(e_{ii})e_{jj}+e_{ii}D(e_{jj})=(a_{ij}^{(ii)}+a_{ij}^{(jj)})e_{ij}=0_T.
\end{align*}
Therefore, $D(e_{ii}e_{jj})=D(e_{ii})e_{jj}+e_{ii}D(e_{jj})$.

\underline{Case 2:} Let $j\neq k$. In this case we  establish $D(e_{ii}e_{jk})=D(e_{ii})e_{jk}+e_{ii}D(e_{jk})$ for all $i,j,k\in \{1,2,3,\cdots,n\}$.

If $i\neq j$, then by using \eqref{eq:9}, \eqref{eq:10} and \eqref{eq:13},
\begin{align*}
D(e_{ii})e_{jk}+e_{ii}D(e_{jk})=(a_{ij}^{(ii)}+a_{ij}^{(jj)})e_{ik}=0_T.
\end{align*}

If $i=j$, then by using \eqref{eq:11},
\begin{align*}
D(e_{ik})e_{ii}+e_{ik}D(e_{ii})=(a_{ii}^{(ik)}+a_{ki}^{(ii)})e_{ii}=0_T.
\end{align*}

\underline{Case 3:} Let $i\neq j$ and $k\neq l$. Now, our goal is to prove  $D(e_{ij}e_{kl})=D(e_{ij})e_{kl}+e_{ij}D(e_{kl})$ for all $i,j,k,l\in \{1,2,3,\cdots,n\}$.

Let $j\neq k$. Then by Case 2 and \eqref{eq:1},
\begin{equation}
\label{eq:17}
D(e_{ij}e_{kk}) = 0_T \implies D(e_{ij})e_{kk}+e_{ij}D(e_{kk}) = 0_T \implies a_{ik}^{(ij)}+a_{jk}^{(kk)} = 0.
\end{equation}

Also, by using \eqref{eq:17}, we have
\begin{align*}
D(e_{ij})e_{kl}+e_{ij}D(e_{kl})=(a_{ik}^{(ij)}+a_{jk}^{(kk)})e_{il}=0_T.
\end{align*}

Let $j=k$ and $i\neq l$. Replacing $(i,j,k)$ by $(j,l,i)$  in \eqref{eq:17}, we have
\begin{align*}
D(e_{jl})e_{ij}+e_{jl}D(e_{ij})=(a_{ji}^{(jl)}+a_{li}^{(ii)})e_{jj}=0_T.
\end{align*}

Let $j=k$ and $i=l$. Interchanging $i$ and $j$ in \eqref{eq:14}, we get
\begin{align*}
D(e_{ij})e_{ji}+e_{ij}D(e_{ji}) &= a_{1i}^{(ii)}e_{1i}+ \cdots +a_{i-1,i}^{(ii)}e_{i-1,i}+a_{i+1,i}^{(ii)}e_{i+1,i}+ \cdots +a_{ni}^{(ii)}e_{ni} \\
      &+ a_{j1}^{(ji)}e_{i1}+ \cdots +a_{j,i-1}^{(ji)}e_{i,i-1}+a_{j,i+1}^{(ji)}e_{i,i+1}+ \cdots +a_{jn}^{(ji)}e_{in} \\
      &= a_{1i}^{(ii)}e_{1i}+ \cdots +a_{i-1,i}^{(ii)}e_{i-1,i}+a_{i+1,i}^{(ii)}e_{i+1,i}+ \cdots +a_{ni}^{(ii)}e_{ni} \\
      &+ a_{i1}^{(ii)}e_{i1}+ \cdots +a_{i,i-1}^{(ii)}e_{i,i-1}+a_{i,i+1}^{(ii)}e_{i,i+1}+ \cdots +a_{in}^{(ii)}e_{in} \\
      &= D(e_{ii}).
\end{align*}

This proves our claim.
Finally, proof of $D(AB)=D(A)B+AD(B)$, where $A, B \in \mathcal{M}_{n}(C)$ is same as in Theorem \ref{3.1}. Thus, $D$ is a derivation on $\mathcal{M}_{n}(C)$.
\end{proof}


To find an example of Jordan derivation over $\mathcal{M}_{n}(C)$, we get only inner derivation. Towards this, we have the following:\\

\begin{theorem}
\label{4.2}
Let $C$ be a commutative ring with unity and $\mathcal{M}_n(C)$ be the algebra of all $n\times n$ matrices over $C$. Then every derivation of $\mathcal{M}_{n}(C),~n\geq 2$ is an inner derivation.
\end{theorem}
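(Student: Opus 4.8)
The plan is to mimic the proof of Theorem~\ref{3.3}: exhibit an explicit matrix $B\in\mathcal{M}_n(C)$ and verify that $D(X)=BX-XB$ for every $X$. Since a derivation is a Jordan derivation, every identity established in the proof of Theorem~\ref{4.1} is available; in particular the formulas \eqref{eq:9} for $D(e_{ii})$ and \eqref{eq:13} for $D(e_{ij})$ with $i\neq j$, together with the linear relations \eqref{eq:10}, \eqref{eq:14}, \eqref{eq:14a} and \eqref{eq:11} among the structure constants $a^{(ij)}_{kl}$. Because $D$ is additive and $C$-linear and $\mathcal{M}_n(C)$ is spanned by the matrix units, it suffices to check $D(e_{ij})=Be_{ij}-e_{ij}B$ for all $i,j$.

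First I would use the genuine derivation property on products of matrix units to pin down the remaining structure constants. From $e_{ij}=e_{i1}e_{1j}$ (for $i,j\neq 1$), $e_{1j}=e_{11}e_{1j}$, $e_{i1}=e_{i1}e_{11}$, and $D(e_{ij})=D(e_{i1})e_{1j}+e_{i1}D(e_{1j})$, one gets the additivity relation $a^{(ij)}_{ij}=a^{(i1)}_{i1}+a^{(1j)}_{1j}$ (the analogue of \eqref{8b}), and similarly the off-diagonal coefficients of $D(e_{ij})$ are forced to be differences of those of $D(e_{i1})$ and $D(e_{1j})$, consistent with \eqref{eq:13} and \eqref{eq:14}. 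This reduces everything to the ``first row and first column'' data $a^{(1j)}_{1j}$ and $a^{(j1)}_{j1}$ together with the $a^{(jj)}_{j1},a^{(jj)}_{1j}$ appearing in $D(e_{jj})$.

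Then I would propose
\[
B \;=\; \sum_{l=2}^{n}\bigl(-a^{(1l)}_{1l}\bigr)e_{ll}\;+\;\sum_{\substack{1\le i\le n,\ i\neq 1}}a^{(ii)}_{i1}e_{i1}\;+\;\sum_{\substack{1\le j\le n,\ j\neq 1}}a^{(jj)}_{1j}e_{1j},
\]
i.e. the natural full-matrix counterpart of the matrix $B$ used in Theorem~\ref{3.3}, now carrying both a strictly-lower-triangular first-column part and a strictly-upper-triangular first-row part (one may also absorb an arbitrary scalar multiple of $1_T$, which commutes with everything). Computing $Be_{ij}-e_{ij}B$ for each of the cases $i=j=1$, $i=1<j$, $j=1<i$, $i=j\neq 1$, and $i\neq j$ with $i,j\neq 1$, and comparing with \eqref{eq:9} and \eqref{eq:13} using \eqref{eq:10}, \eqref{eq:14}, \eqref{eq:14a}, \eqref{eq:11}, should give equality in every case.

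The main obstacle I anticipate is bookkeeping rather than conceptual: making sure the diagonal entries of $B$ are chosen so that the commutator reproduces the diagonal-adjacent coefficients $a^{(jj)}_{j1}$ and $a^{(jj)}_{1j}$ of $D(e_{jj})$ correctly (this is exactly what \eqref{eq:14a} and \eqref{eq:11} are for), and checking that the off-diagonal coefficients $a^{(1i)}_{1j}$-type entries produced by the commutator match \eqref{eq:13}–\eqref{eq:14} for general $i,j$. Unlike the triangular case there is no natural ``top-left corner'' asymmetry to exploit, so one must verify both the lower- and upper-triangular parts of $B$ simultaneously; once the first-row and first-column relations above are in hand, the verification is a direct case check and $D$ is an inner derivation.
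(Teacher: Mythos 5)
Your overall strategy is the same as the paper's: use the derivation property on products of matrix units to get the extra relations beyond those of Theorem \ref{4.1} (your identity $a^{(ij)}_{ij}=a^{(i1)}_{i1}+a^{(1j)}_{1j}$ is \eqref{eq:17a} specialized, and the diagonal part of your $B$ coincides with the paper's), and then exhibit $B$ explicitly. However, your proposed $B$ is wrong, and the error traces back to the claim that everything ``reduces to the first row and first column data.'' Matching $D(e_{jj})$ against the commutator is unforgiving here: since $Be_{jj}-e_{jj}B=\sum_{k\neq j}B_{kj}e_{kj}-\sum_{l\neq j}B_{jl}e_{jl}$, comparison with \eqref{eq:9} forces $B_{ij}=a^{(jj)}_{ij}$ for \emph{every} pair $i\neq j$ (this is consistent for rows and columns precisely because of \eqref{eq:10}, and $B$ is determined up to a central summand). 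Your $B$ has zero in every off-diagonal position $(i,j)$ with $i,j\geq 2$, so it fails whenever some $a^{(jj)}_{ij}\neq 0$ with $i,j\geq 2$. Concretely, for $D=\mathrm{ad}(e_{23})$ on $\mathcal{M}_3(C)$ one computes $a^{(12)}_{12}=a^{(13)}_{13}=0$ and $a^{(22)}_{21}=a^{(33)}_{31}=a^{(22)}_{12}=a^{(33)}_{13}=0$, so your formula gives $B=0$ while $D\neq 0$; the missing ingredient is exactly $a^{(33)}_{23}=1$ sitting at position $(2,3)$. In addition, your first-column terms carry the wrong sign: the matching condition requires $B_{i1}=a^{(11)}_{i1}=-a^{(ii)}_{i1}$ by \eqref{eq:10}, whereas you take $+a^{(ii)}_{i1}$.

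The confusion is between two different statements: it is true that the coefficients of each $D(e_{ij})$ are determined by those of $D(e_{i1})$ and $D(e_{1j})$ (so the derivation is determined by first-row and first-column images), but the implementing element of an inner derivation must record the full array of off-diagonal coefficients $a^{(jj)}_{ij}$, $i\neq j$. The paper's choice
\begin{equation*}
B=\sum_{l=2}^{n}\bigl(-a^{(1l)}_{1l}\bigr)e_{ll}+\mathop{\sum_{i=1}^{n}\sum_{j=1}^{n}}_{i\neq j}a^{(jj)}_{ij}e_{ij}
\end{equation*}
does exactly this, and the verification then goes through using \eqref{eq:9}, \eqref{eq:10}, \eqref{eq:13}, \eqref{eq:17a} and \eqref{eq:17b}. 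Replacing your off-diagonal part by this full double sum (and keeping your diagonal part) repairs the argument; the rest of your plan is sound.
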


\begin{proof}
Let $D$ be a derivation on $\mathcal{M}_n(C)$. Since every derivation is a Jordan derivation, all the identities in proof of the Theorem \ref{4.1} hold.

Let $i\neq j\neq k$. Since $e_{ik}=e_{ij}e_{jk}$ and $D$ is a derivation on $\mathcal{M}_n(C)$, we have
$D(e_{ik})=D(e_{ij})e_{jk}+e_{ij}D(e_{jk})$. By \eqref{eq:13}, equating the coefficient of $e_{ik}$ from both sides,
\begin{equation}
\label{eq:17a}
a_{ik}^{(ik)}=a_{ij}^{(ij)}+a_{jk}^{(jk)}.
\end{equation}

For $i\neq j$, from $e_{ii}=e_{ij}e_{ji}$, we have
$D(e_{ii})=D(e_{ij})e_{ji}+e_{ij}D(e_{ji})$. By \eqref{eq:9} and \eqref{eq:13}, equating the coefficient of $e_{ii}$ from both sides,
\begin{equation}
\label{eq:17b}
a_{ij}^{(ij)}+a_{ji}^{(ji)}=0.
\end{equation}

Let $X=\sum_{k=1}^{n}\sum_{l=1}^{n}x_{kl}e_{kl}$, where $x_{kl} \in C$. Now, by using \eqref{eq:9}, \eqref{eq:10}, \eqref{eq:13}, \eqref{eq:17a} and \eqref{eq:17b}, $D(X)=BX-XB$, where
\begin{align*}
B=\sum_{l=2}^{n}(-a_{1l}^{(1l)})e_{ll}+ \mathop {\sum_{i=1}^{n}\sum_{j=1}^{n}}_{i\neq j}a_{ij}^{(jj)}e_{ij}.
\end{align*}

Thus, $D$ is an inner derivation.
\end{proof}

As a consequence of Theorem \ref{4.1} and \ref{4.2}, we have the following.

\begin{cor}
\label{4.3}
Every Jordan derivation over $\mathcal{M}_n(C),~n\geq 2$ is an inner derivation.
\end{cor}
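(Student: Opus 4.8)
The plan is to obtain Corollary \ref{4.3} as an immediate composition of the two preceding theorems, exactly as Corollary \ref{3.4} is deduced from Theorems \ref{3.1} and \ref{3.3} in the triangular case. Concretely, let $D\colon \mathcal{M}_n(C)\to\mathcal{M}_n(C)$ be an arbitrary Jordan derivation with $n\ge 2$. First I would invoke Theorem \ref{4.1} to upgrade $D$ to a genuine derivation; the point worth emphasising is that Theorem \ref{4.1} carries no $2$-torsion-freeness hypothesis on $C$, so this step is unconditional for every commutative ring $C$ with unity. Once $D$ is known to be a derivation, Theorem \ref{4.2} applies verbatim and yields an element $B\in\mathcal{M}_n(C)$ with $D(X)=BX-XB$ for all $X\in\mathcal{M}_n(C)$. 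Hence $D$ is inner, which is exactly the assertion, and the proof is the one line ``by Theorems \ref{4.1} and \ref{4.2}''.

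Alternatively, and perhaps more informatively, one can bypass the two-step formulation and argue directly. The proof of Theorem \ref{4.1} shows that for a Jordan derivation $D$ the values $D(e_{ii})$ and $D(e_{ij})$ are already pinned down in terms of the scalars $a_{kl}^{(ii)}$, $a_{kl}^{(ij)}$ via the relations derived there, in particular \eqref{eq:9} and \eqref{eq:13}, while the proof of Theorem \ref{4.2} exhibits the candidate
\[
B=\sum_{l=2}^{n}(-a_{1l}^{(1l)})e_{ll}+ \mathop{\sum_{i=1}^{n}\sum_{j=1}^{n}}_{i\neq j}a_{ij}^{(jj)}e_{ij},
\]
whose coefficients are precisely these same scalars. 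So one may simply verify $D(e_{ij})=Be_{ij}-e_{ij}B$ on all matrix units, using \eqref{eq:9}, \eqref{eq:13}, \eqref{eq:14} together with \eqref{eq:17a} and \eqref{eq:17b}, and then extend to arbitrary $X=\sum x_{kl}e_{kl}$ by $C$-linearity. The extension step is legitimate because, by Theorem \ref{4.1}, $D$ is already a derivation, so $D$ and the map $X\mapsto BX-XB$ are two derivations of $\mathcal{M}_n(C)$ agreeing on the generating set $\{e_{ij}\}$, hence equal.

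I do not anticipate a genuine obstacle: all the content is already contained in Theorems \ref{4.1} and \ref{4.2}, and the corollary is merely their conjunction. If anything requires care, it is purely a matter of bookkeeping, namely checking that the relations \eqref{eq:17a} and \eqref{eq:17b}, which in the proof of Theorem \ref{4.2} are obtained from $e_{ik}=e_{ij}e_{jk}$ and $e_{ii}=e_{ij}e_{ji}$ for a \emph{derivation}, are indeed available for an arbitrary Jordan derivation. Since Theorem \ref{4.1} converts every Jordan derivation into a derivation first, these identities are at our disposal, and no further hypothesis on $C$ is needed. Thus the short proof is both correct and complete.
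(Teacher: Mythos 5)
Your proposal is correct and matches the paper exactly: the corollary is stated there as an immediate consequence of Theorems \ref{4.1} and \ref{4.2}, precisely the one-line composition you give in your first paragraph. The alternative direct verification you sketch is unnecessary but harmless.
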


Let $D$ be a Jordan derivation on $\mathcal{M}_n(C)$. The question is whether there exists a unique $B\in \mathcal{M}_n(C)$ such that $D(X)=BX-XB$, for all $X\in \mathcal{M}_n(C)$. The answer is given by the following example.
\begin{ex}
Define $D:\mathcal{M}_4(\mathbb{Z}) \rightarrow \mathcal{M}_4(\mathbb{Z})$ by \\
\[D \begin{pmatrix}
x_{11}&x_{12}&x_{13}&x_{14}\\
x_{21}&x_{22}&x_{23}&x_{24}\\
x_{31}&x_{32}&x_{33}&x_{34}\\
x_{41}&x_{42}&x_{43}&x_{44}
\end{pmatrix}= \begin{pmatrix}
0&x_{12}&-x_{12}+x_{13}&x_{14}\\
-x_{21}+x_{31}&x_{32}&-x_{22}+x_{33}&x_{34}\\
-x_{31}&0&-x_{32}&0\\
-x_{41}&0&-x_{42}&0
\end{pmatrix},\\
~\text{where}~x_{ij}\in \mathbb{Z}. \]
Then it is easy to see that $D$ is a Jordan derivation. By Corollary \ref{4.3}, $D$ is an inner derivation. Moreover, $D(X)=BX-XB$, for $B=e_{11}+e_{23}$ or $B= 2e_{11}+e_{22}+e_{33}+e_{44}+e_{23}$. Therefore, for $B$, we have multiple choices.
\end{ex}

\section*{Acknowledgement}
The authors are thankful to DST, Govt. of India for financial support and Indian Institute of Technology Patna for providing the research facilities.

\end{document}